\newtheorem{theorem}{Theorem}[section]
\newtheorem{lemma}[theorem]{Lemma}
\newtheorem{proposition}[theorem]{Proposition}
\newtheorem{cor}[theorem]{Corollary}
\theoremstyle{definition}
\newtheorem{definition}[theorem]{Definition}
\newtheorem{example}[theorem]{Example}
\theoremstyle{remark}
\newtheorem{remark}[theorem]{Remark}
\newcommand{\NN}{\mathbb{N}}
\newcommand{\ZZ}{\mathbb{Z}}
\newcommand{\QQ}{\mathbb{Q}}
\newcommand{\CC}{\mathbb{C}}
\newcommand{\camo}[1]{{\mathcal{C}_{#1}}}
\newcommand{\slgrp}{{\mathrm{SL}}}
\newcommand{\glgrp}{{\mathrm{GL}}}
\newcommand{\rank}{{\mathop{\mathrm{rank}}}}
\newcommand{\id}{{\mathrm{id}}}
\newcommand{\mat}[2]{{\mathrm{Mat}( #1, #2 )}}
\title{The Density Property for Calogero--Moser spaces}
\author{Rafael B. Andrist}
\address{Rafael B. Andrist \\ Department of Mathematics \\
American University of Beirut \\
Beirut, Lebanon}
\date{\today}
\keywords{Density property, holomorphic automorphisms, flexibility, infinite transitivity, Andersen-Lempert theory, Calogero-Moser space}
\subjclass[2010]{Primary 32M17, Secondary 14R20}
\begin{document}
\begin{abstract}
\hfill \\
We prove the algebraic density property for the Calogero--Moser spaces $\camo{n}$, and give a description of the identity component of the group of holomorphic automorphisms of $\camo{n}$.
\end{abstract}

\maketitle

\section{Introduction}

The Calogero--Moser space $\camo{n}$, $n \in \NN$, describes the phase space of a Calogero--Moser system, which is a $n$-particle system in classical physics with a certain Hamiltonian. The idea to study these spaces as quotients of a group action goes back to Kazhdan, Kostant and Sternberg \cite{KKS}. A lot of interesting mathematical properties of these spaces were established by Wilson \cite{wilson}, and since then they have been an ongoing object of study in pure mathematics.

\begin{definition}[\cite{wilson}]
Let $\camo{n}$ be the $n$-th \emph{Calogero--Moser space}, $n \in \NN$, defined as follows:
\[
\begin{split}
\widetilde{\camo{n}} := 
\{
(X,Y) \in \mat{n \times n}{\CC} \times \mat{n \times n}{\CC} \;:\; \\ \rank( [X,Y] + \id ) = 1
\}
\end{split}
\]
Let $\glgrp_n(\CC)$ act on $\widetilde{\camo{n}}$ by simultaneous conjugation in both factors: $G \cdot (X,Y) := (GXG^{-1},GYG^{-1})$. Then
\[
{\camo{n}} := \widetilde{\camo{n}} \,/\!/ \, \glgrp_n(\CC)
\]
\end{definition}

\smallskip

Note that this $\glgrp_n(\CC)$-action also conjugates the commutator $[X,Y]$ and hence leaves the rank-$1$ condition invariant, thus the Calogero--Moser space $\camo{n}$ is well-defined.

One can easily see from the definition that $\camo{1} = \CC^2$. However for $n \geq 2$, the structure of the Calogero--Moser spaces $\camo{n}$ becomes more complicated. In fact, according to \cite{wilson}*{Section~8} the space $\camo{n}$ is diffeomorphic to the Hilbert scheme of $n$ points in $\CC^2$, whose Borel--Moore homology has been calculated by Ellingsrud and Str{\o}mme \cite{homology}.

As shown in \cite{wilson}*{Section~1}, $\camo{n}$ is a smooth irreducible complex affine-algebraic variety of dimension $2n$. Later, Berest and Wilson showed that the group of algebraic automorphisms of $\camo{n}$ acts transitively \cite{transitive}. And more recently, Popov \cite{popov}*{Remark~5} established that $\camo{n}$ is a rational variety. 

\bigskip

We note that the Calogero--Moser space $\camo{n}$ is a flexible variety (see Section \ref{secflex}). This follows directly from the transitive action of the subgroup that is generated by locally nilpotent derivations, which had been established by Berest and Wilson \cite{transitive}, and a more general result characterizing flexibility by Arzhantsev et al.\ \cite{A-Z}. This is implicit also in Berest--Eshmatov--Eshmatov \cite{BEE}*{Conjecture 1, Remark 3}, but seems not to have been stated explicitly in the literature yet. Independent of the more general result, we give a short proof of the flexibility of the Calogero--Moser space $\camo{n}$ using only the transitivity result \cite{transitive}, see Proposition~\ref{propflex}.

\bigskip

The main result of this article is that the Calogero--Moser space $\camo{n}$ enjoys the algebraic density property, see Theorem~\ref{thmdens}. While this is well-known for $\camo{1} = \CC^2$, these results are new for the Calogero--Moser spaces $\camo{n}$, $n \geq 2,$ which provide new examples of flexible manifolds and of manifolds with the algebraic density property. Stein manifolds with the density property enjoy a Runge-type approximation theorem for holomorphic injections, which in turn can be used to show infinite transitivity, see Section \ref{secdens}.

\smallskip

In the Section \ref{secexample} we give an explicit description of $\camo{2}$ as an affine variety, and describe some Calogero--Moser flows and other $\CC^+$-flows of different type on $\camo{2}$.

\section{Flexibility and infinite transitivity}
\label{secflex}

The notion of flexibility was introduced in a paper by Arzhantsev et al.\ \cite{A-Z} and has since stimulated a lot of research in algebraic geometry and complex analysis.

\begin{definition}
Let $G$ be a group acting on a set $M$. We call the action \emph{infinitely transitive} if it is $m$-transitive for every $m \in \NN$.
\end{definition}

\begin{remark}
The term ``infinitely transitive'' may be misleading, but has been established in the literature. However, in the algebraic category, it is clear that $m$-transitivity for every $m \in \NN$ is the best one can hope for. In the holomorphic category, the situation is slighly more sophisticated. By a result of Winkelmann \cite{winkelmann}, every Stein manifold of dimension at least $2$ contains an infinite closed discrete subset whose image under a holomorphic automorphism can't be prescribed. Hence, $m$-transitivity for every $m \in \NN$ is the optimal result also in the holomorphic category.
\end{remark}

\begin{definition}
We call a complex algebraic variety $M$ \emph{flexible} if there exist finitely many locally nilpotent derivations on the ring of regular functions of $M$ such they span the tangent space in each point of $M_{\mathrm{reg}}$.
\end{definition}

For a reference book on locally nilpotent derivations, we refer the reader to the monograph of Freudenburg \cite{LND}. We only want to emphasize the following basic remark as we may switch between $\CC^{+}$-actions and locally nilpotent derivations.

\begin{remark}
The flow of a locally nilpotent derivation is an algebraic $\CC^{+}$-action. Conversely, the time-derivative of an algebraic $\CC^{+}$-action is a locally nilpotent derivation.
\end{remark}

\begin{definition}
The \emph{group of special automorphisms} $\mathrm{SAut}(M)$ of a complex algebraic variety $M$ is the group generated by the flows of locally nilpotent derivations on the ring of regular functions of $M$.
\end{definition}

\begin{theorem}[\cite{A-Z}*{Theorem~0.1}]
\label{thmflextrans}
For a reduced irreducible complex affine-algebraic variety $M$ of dimension $\geq 2$, the following conditions are equivalent.
\begin{enumerate}
\item The group $\mathrm{SAut}(M)$ acts transitively on $M_{\mathrm{reg}}$.
\item The group $\mathrm{SAut}(M)$ acts infinitely transitively on $M_{\mathrm{reg}}$.
\item $M$ is a flexible variety.
\end{enumerate}
\end{theorem}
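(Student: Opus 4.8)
The plan is to prove the three conditions equivalent by establishing the two easy directions and then concentrating on the single hard implication $(1) \Rightarrow (2)$. The implication $(2) \Rightarrow (1)$ is immediate, since infinite transitivity means $m$-transitivity for all $m \in \NN$, in particular for $m = 1$. For the equivalence of $(1)$ and $(3)$ I would argue infinitesimally. At a smooth point $x$, the tangent space to the orbit $\mathrm{SAut}(M)\cdot x$ is spanned by the values $\delta(x)$ of all locally nilpotent derivations $\delta$. If $M$ is flexible, finitely many such $\delta$ already span $T_x M$, so the orbit is open; since $M_{\mathrm{reg}}$ is irreducible, hence connected, and is partitioned into open orbits, there can be only one, giving transitivity. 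Conversely, transitivity forces the span of the derivation values to be all of $T_x M$ at every $x$, and a Noetherianity/semicontinuity argument extracts a single finite family that spans generically, which one then propagates to all of $M_{\mathrm{reg}}$ using the homogeneity already available. This part is comparatively soft.

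The heart of the matter is $(1) \Rightarrow (2)$, which I would prove by induction on the degree of transitivity. Assuming $(m-1)$-transitivity, given distinct points $p_1, \dots, p_m$ and targets $q_1, \dots, q_m$, one first moves $p_1, \dots, p_{m-1}$ to $q_1, \dots, q_{m-1}$ by an element of $\mathrm{SAut}(M)$, which carries $p_m$ to some point $p_m'$. It then suffices to show that the subgroup of $\mathrm{SAut}(M)$ fixing $q_1, \dots, q_{m-1}$ pointwise still acts transitively on $M_{\mathrm{reg}} \setminus \{q_1, \dots, q_{m-1}\}$, so that $p_m'$ can be sent to $q_m$.

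The key tool here is the notion of a \emph{replica}: if $\delta$ is a locally nilpotent derivation and $f \in \ker \delta$, then $f\delta$ is again locally nilpotent, and its flow $\exp(t\, f\delta)$ fixes every point where $f$ vanishes, since the velocity there equals $f(x)\,\delta(x) = 0$. Thus, to fix the points $q_1, \dots, q_{m-1}$ while producing a nonzero velocity at a given point $p$, I would take derivations $\delta_1, \dots, \delta_k$ whose values span $T_p M$ (flexibility) and, for each $j$, choose $f_j \in \ker \delta_j$ with $f_j(q_i) = 0$ for all $i$ but $f_j(p) \neq 0$. The replicas $f_j \delta_j$ then lie in the stabilizer of $q_1,\dots,q_{m-1}$, and their values $f_j(p)\,\delta_j(p)$ still span $T_p M$, so the stabilizer has an open orbit through $p$; the same connectedness argument as before upgrades this to transitivity on the complement.

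The step I expect to be the main obstacle is precisely the existence of the invariant functions $f_j$: one needs $\ker \delta_j$ to contain a function vanishing on $\{q_1,\dots,q_{m-1}\}$ but not at $p$. This can fail when $p$ and some $q_i$ lie on the same orbit of the $\CC^{+}$-action generated by $\delta_j$, since then they share all $\delta_j$-invariants. Overcoming this requires exploiting the full flexibility: one must either select, for each target tangent direction, a derivation for which $p$ and the $q_i$ lie in distinct fibres of the invariant quotient, or first apply an auxiliary flow to separate them before invoking the replica construction. Arranging this separation uniformly for all the directions needed to span $T_p M$ is the genuinely technical part of the argument.
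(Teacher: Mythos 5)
The paper itself contains no proof of this statement: it is quoted verbatim as Theorem~0.1 of Arzhantsev et al.\ \cite{A-Z}, and the author deliberately proves Proposition~\ref{propflex} by hand precisely so as not to rely on it. Your proposal can therefore only be compared with the proof in \cite{A-Z}, whose architecture it does follow: $(2)\Rightarrow(1)$ is trivial; $(3)\Rightarrow(1)$ is the open-orbit-plus-connectedness argument; $(1)\Rightarrow(3)$ uses that the family of all locally nilpotent derivations is closed under conjugation by automorphisms of $M$, so transitivity homogenizes the pointwise span, after which a Noetherian covering argument extracts a finite spanning family; and $(1)\Rightarrow(2)$ is an induction in which replicas $f\delta$, $f \in \ker\delta$, produce automorphisms fixing $q_1,\dots,q_{m-1}$ with prescribed nonzero velocity at $p$.

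The genuine gap is that your last paragraph concedes, rather than closes, the step that constitutes the actual content of the theorem. You need, for each of the spanning directions $\delta_j(p)$, a function $f_j \in \ker\delta_j$ with $f_j(q_i) = 0$ for all $i$ and $f_j(p) \neq 0$, and you correctly observe that such an $f_j$ need not exist; but ``select a derivation for which $p$ and the $q_i$ lie in distinct fibres of the invariant quotient, or first apply an auxiliary flow to separate them'' is a statement of what must be proved, not a proof. In \cite{A-Z} this is exactly what the machinery of \emph{saturated} sets of derivations is for: one works with a family closed under both replicas and conjugation $\delta \mapsto g_{*}\delta$ by elements $g$ of the generated group, and proves a general-position lemma --- using $\dim M \geq 2$ and the transitivity already established --- showing that the finitely many ``bad'' quotient fibres through the points of $Q = \{q_1,\dots,q_{m-1}\}$ can be avoided by passing to a suitable conjugate (equivalently, by first moving $p$ with automorphisms that fix $Q$), and that this can be arranged simultaneously for enough derivations to span $T_p M$. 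That lemma is the technical heart of their argument and is entirely absent from your sketch. A smaller inaccuracy in the same place: sharing all $\delta_j$-invariants is \emph{not} equivalent to lying on one $\CC^{+}$-orbit --- distinct orbits inside a degenerate fibre of the quotient map need not be separated by $\ker\delta_j$ --- so the locus to be avoided is a union of quotient fibres, which may have dimension greater than $1$, and any avoidance argument must account for this. As it stands, your text is a correct road map of the known proof with its crux left open.
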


\begin{example}
Examples of flexible complex manifolds, that are affine, include:
\begin{enumerate}
\item Complex linear groups without non-constant morphisms to $\CC^\ast$
\item Smooth Danielewski surfaces $\{ (x,y,z) \in \CC^3 \,:\,  x \cdot y - p(z) = 0 \}$ where $p$ is a polynomial with simple roots
\end{enumerate}
\end{example}

We give here a short and more elementary proof of the flexibility of the Calogero--Moser spaces that does not rely on the much more general result of Theorem \ref{thmflextrans}  above.

\begin{definition}[\cite{densitycriteria}*{Definition 2.1}]
Let $M$ be a complex algebraic manifold. We call $M$ \emph{tangentially semi-homogeneous} if it is homogeneous (with respect to the algebraic automorphisms of $M$) and admits a \emph{generating set} consisting of one vector, i.e.\ $\exists x_0 \in M, \; \exists v \in T_{x_0} M$ such that the image of $v$ under the induced action of the isotropy group of $x_0$ on $T_{x_0} M$ spans $T_{x_0} M$.
\end{definition}

\begin{proposition} \hfill
\label{propflex}
\begin{enumerate}
\item The Calogero--Moser space $\camo{n}$ is flexible for every $n \geq 1$.
\item The Calogero--Moser space $\camo{n}$ is tangentially semi-homogeneous for every $n \geq 1$.
\end{enumerate}
\end{proposition}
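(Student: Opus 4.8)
The plan is to produce both statements from an explicit supply of $\CC^{+}$-actions together with the transitivity of $\mathrm{Aut}(\camo{n})$ proved by Berest and Wilson \cite{transitive}, with essentially all the geometric content concentrated at a single well-chosen torus-fixed point. First I would record the relevant complete algebraic vector fields. For any polynomial $p$ the map $(X,Y)\mapsto (X, Y+p(X))$ preserves $\widetilde{\camo{n}}$ because $[X,Y+p(X)]=[X,Y]$ keeps the rank-one condition, and it commutes with simultaneous conjugation, hence descends to a $\CC^{+}$-action on $\camo{n}$ whose generator is the locally nilpotent derivation $D_{p}$ given by $X\mapsto 0,\ Y\mapsto p(X)$; symmetrically $(X,Y)\mapsto (X+q(Y),Y)$ yields $\widetilde D_{q}$ with $Y\mapsto 0,\ X\mapsto q(Y)$. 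Furthermore, whenever $a,b,c,d\in\CC$ satisfy $ad-bc=1$ the linear substitution $(X,Y)\mapsto (aX+bY,\ cX+dY)$ again preserves $\widetilde{\camo{n}}$, since $[aX+bY,cX+dY]=(ad-bc)[X,Y]=[X,Y]$, and descends to an algebraic action of $\slgrp_2(\CC)$ on $\camo{n}$; its diagonal torus is $\tau_{t}\colon (X,Y)\mapsto (tX,t^{-1}Y)$. Homogeneity of $\camo{n}$ is then exactly the transitivity theorem of \cite{transitive}.

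Next I would localize at a torus-fixed point. A point $[(X_{0},Y_{0})]$ is $\tau$-fixed precisely when there is a one-parameter subgroup $g_{t}\subset\glgrp_n(\CC)$ with $g_{t}X_{0}g_{t}^{-1}=tX_{0}$ and $g_{t}Y_{0}g_{t}^{-1}=t^{-1}Y_{0}$; taking $g_{t}=\mathrm{diag}(1,t,\dots,t^{\,n-1})$ forces $X_{0}$ to be a weighted lower shift and $Y_{0}$ an upper shift, and choosing the coefficients so that $[X_{0},Y_{0}]+\id$ has rank one gives a fixed point $x_{0}$ of type $(n)$. Writing $\phi_{t}$ for $\tau_{t}$ corrected by conjugation with $g_{t}$ so that $\phi_{t}$ fixes $(X_{0},Y_{0})$, a direct computation of $d\phi_{t}$ shows that $D_{X^{k}}(x_{0})$ is a $\tau$-weight vector of weight $-(k+1)$ while $\widetilde D_{Y^{k}}(x_{0})$ has weight $+(k+1)$; thus for $0\le k\le n-1$ these $2n$ values lie in the $2n$ pairwise distinct weight lines $-1,\dots,-n,+1,\dots,+n$. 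Vectors lying in distinct weight lines are linearly independent as soon as each is nonzero, so after checking that each value is nonzero modulo the orbit directions $\{([A,X_{0}],[A,Y_{0}]):A\in\mat{n\times n}{\CC}\}$, they form a weight basis of the $2n$-dimensional space $T_{x_{0}}\camo{n}$. In particular the isotropy representation of $\tau$ is multiplicity-free, so any $v$ with nonzero component in each weight line is a generating vector, which gives tangential semi-homogeneity~(2); and simultaneously the $2n$ explicit locally nilpotent derivations $D_{X^{0}},\dots,D_{X^{n-1}},\widetilde D_{Y^{0}},\dots,\widetilde D_{Y^{n-1}}$ span $T_{x_{0}}\camo{n}$ at $x_{0}$.

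To upgrade this to flexibility~(1) I would spread the spanning over the whole space. The locus $U\subseteq\camo{n}$ on which the fixed finite family above spans the tangent space is Zariski open and contains $x_{0}$, hence nonempty. A conjugate of a locally nilpotent derivation by an algebraic automorphism is again locally nilpotent, and for $g\in\mathrm{Aut}(\camo{n})$ the pushed-forward fields span exactly on $g(U)$; by the transitivity of \cite{transitive} the translates $g(U)$ cover $\camo{n}$, and Noetherianity (quasi-compactness) lets me choose finitely many $g_{1},\dots,g_{r}$ with $\camo{n}=\bigcup_{i}g_{i}(U)$. The finite collection of locally nilpotent derivations obtained by pushing the $2n$ fields forward under $g_{1},\dots,g_{r}$ then spans the tangent space at every point, which is flexibility.

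The main obstacle is the local linear algebra at $x_{0}$: producing the type-$(n)$ pair $(X_{0},Y_{0})$ with $\rank([X_{0},Y_{0}]+\id)=1$ explicitly, computing the induced torus weights on the quotient tangent space, and — the genuinely load-bearing point — verifying that each of the $2n$ candidate values is nonzero modulo orbit directions, so that the distinct weights indeed force a basis. Everything downstream (the multiplicity-free conclusion, the generating vector, and the globalization to flexibility) is then formal.
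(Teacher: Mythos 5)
Your strategy is sound and it is genuinely different from the paper's. The paper works with the same supply of complete fields (the Calogero--Moser flows $X^k\frac{\partial}{\partial Y}$, $Y^k\frac{\partial}{\partial X}$) and the same Berest--Wilson transitivity, but it does the linear algebra at a \emph{generic} point rather than at a torus-fixed one: using Wilson's parametrization of the locus where $X$ has $n$ distinct eigenvalues ($X$ diagonal, $Y$ with entries $(\lambda_i-\lambda_j)^{-1}$ off the diagonal and free parameters $\alpha_i$ on it), it chooses $(\lambda,\alpha)$ algebraically independent over $\QQ$, so that the diagonals of $X_0,\dots,X_0^n$ and of $Y_0,\dots,Y_0^n$ each span $\CC^n$ and the $2n$ flows span the tangent space; tangential semi-homogeneity is then obtained not from a torus but from the explicit isotropy shears $F_k(X,Y)=(X+(\mathop{\mathrm{tr}}Y-\mathop{\mathrm{tr}}Y_0)\cdot Y^k,\,Y)$ and $G_k$, whose unipotent differentials let a suitably generic tangent vector generate. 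The trade-off: the paper's chart trivializes the tangent bundle, so ``nonzero'' and ``spanning'' are read off immediately, at the price of an ad hoc algebraic-independence choice; your weight argument is more structural and yields the cleaner multiplicity-free description of generating vectors, at the price of computing in the quotient tangent space at a special point. The two globalization steps are equivalent (your quasi-compactness argument is, if anything, tidier than the paper's induction on the dimension of the non-spanning locus).

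The step you flag as the obstacle is real, but it fills in a few lines, so it should be written out rather than left as a remark. Take $X_0$ the lower shift, $(X_0)_{i+1,i}=1$, and $Y_0$ the upper shift with $(Y_0)_{i,i+1}=i$; then $[X_0,Y_0]+\id=\mathrm{diag}(0,\dots,0,n)$ has rank one, and conjugation by $g_t=\mathrm{diag}(1,t,\dots,t^{n-1})$ shows that $[(X_0,Y_0)]$ is $\tau$-fixed, with the weights you computed. For non-vanishing modulo orbit directions you do not even need to analyze the commutant of $X_0$: if $(0,X_0^k)=([A,X_0],[A,Y_0])$ for some $A$, then in particular $X_0^k=[A,Y_0]$, so by cyclicity of the trace
\[
\mathop{\mathrm{tr}}\bigl(X_0^k Y_0^k\bigr)=\mathop{\mathrm{tr}}\bigl([A,Y_0]\,Y_0^k\bigr)=0 ,
\]
whereas $\mathop{\mathrm{tr}}(X_0^k Y_0^k)=\sum_{i=k+1}^{n}\prod_{j=i-k}^{i-1}j>0$ for $0\le k\le n-1$ (and equals $n$ for $k=0$), a contradiction; the symmetric computation with $[A,X_0]=Y_0^k$ rules out $(Y_0^k,0)$. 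With this inserted, your proof is complete.
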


\begin{proof}
Let $\Upsilon = (\Upsilon_1, \Upsilon_2) \colon \camo{n} \to \CC^n/S_n \times \CC^n/S_n$ be the map that sends the matrices $(X,Y)$ to their eigenvalues (counted with multiplicity). The fibres of $\Upsilon_1$, $\Upsilon_2$, respectively, are of dimension $n$, see \cite{wilson}*{Section~1}.

For any point $(X,Y) \in \camo{n}$ with $X$ having $n$ distinct eigenvalues, according to Wilson \cite{wilson}*{Proposition~1.10} we can choose the following representative
\[
\begin{split}
&\left(
\begin{pmatrix}
\lambda_1 & & & & \\
& \lambda_2 & & & \\
& & \ddots & & \\
& & & & \lambda_n \\
\end{pmatrix}, \right. \\
&\quad 
\left.
\begin{pmatrix}
\alpha_1 & (\lambda_1 - \lambda_2)^{-1} & \dots &   (\lambda_1 - \lambda_n)^{-1} \\
(\lambda_2 - \lambda_1)^{-1} & \alpha_2 & \ddots & \vdots \\
\vdots & \ddots & \ddots & (\lambda_{n-1} - \lambda_n)^{-1} \\
(\lambda_n - \lambda_1)^{-1} & \dots & (\lambda_{n} - \lambda_{n-1})^{-1} & \alpha_n \\
\end{pmatrix}
\right)
\end{split}
\]
where $(\alpha_1, \dots, \alpha_n) \in \CC^n$ is arbitrary.
This parametrization provides a trivialization of the tangent bundle over the subset $\{(X,Y) \in \camo{n} \,:\, X \text{ has } n \text{ distinct eigenvalues} \}$ which is biholomorphic to
\[
\Omega := \left( \CC^n \setminus \mathrm{diag} \right) \times \CC^n \ni (\lambda_1, \dots, \lambda_n, \alpha_1, \dots \alpha_n)
.\]
Note that the complex numbers $\alpha_1, \dots, \alpha_n$ are \textbf{not} the eigenvalues of $Y$.

\smallskip

The flow maps $(X, Y) \mapsto (X, Y + t X^k)$, where $k \in \NN_0$, are called \emph{Calogero--Moser flows}. It is easy to see that they are well-defined on $\camo{n}$, since they leave the commutator $[X,Y]$ invariant. Since these algebraic flow maps exist for all $t \in \CC$, they correspond to complete algebraic vector fields, which we denote by $X^k \frac{\partial}{\partial Y}$, slightly abusing notation.
We will also call their ``dual'' counterparts $(X, Y) \mapsto (X+ t Y^k, Y)$ \emph{Calogero--Moser flows}, whose corresponding vector fields we denote by $Y^k \frac{\partial}{\partial X}$.

\begin{enumerate}
\item Choose a point $(X_0, Y_0) \in \Omega$ such that $\lambda_1, \dots, \lambda_n, \alpha_1, \dots, \alpha_n$ do not satisfy any non-trivial polynomial relation over $\QQ$. Then the diagonal entries  of $X_0, X_0^2, \dots, X_0^n$, i.e.\ $(\lambda_1^k, \dots, \lambda_n^k)$ for $k = 1, \dots, n$, will span a $n$-dimensional vector space. Similarly, the diagonal entries of $Y_0, Y_0^2, \dots, Y_0^n$, which are rational functions of $\lambda_1, \dots, \lambda_n, \alpha_1, \dots, \alpha_n$ with integer coefficients, will span a $n$-dimensional vector space. Then the vector fields $X^k \frac{\partial}{\partial Y}$ for $k = 1, \dots, n$ span the tangent space in $(X_0, Y_0)$ along the fibre of $\Upsilon_2$. Let $\mu_1, \dots, \mu_n$ denote the eigenvalues of $Y_0$, which then can't satisfy any non-trivial polynomial relation over $\QQ$ either.
Using the same coordinates as above, but with roles of $X$ and $Y$ interchanged, we see that the vector fields $Y^k \frac{\partial}{\partial X}$ for $k = 1, \dots, n$ span the tangent space in $(X_0, Y_0)$ along the fibre of $\Upsilon_1$.
Together, these $2n$ vector fields with complete algebraic flows span the tangent space in $(X_0, Y_0)$ and thus on a Zariski-open subset of $\camo{n}$.

Using the well-known transitivity \cite{transitive}*{Theorem~1.3} of the algebraic automorphism group of the space, the flexibility follows from standard arguments: Let $A \subset \camo{n}$ be the subvariety where the vector fields above do not span. Pick a point $x_0 \in A$ and a point $x_1 \in \camo{n} \setminus A$. By transitivity, there exists an algebraic automorphism $\Phi$ of $\camo{n}$ with $\Phi(x_0) = x_1$. Conjugate the vector fields that span in $x_1$ with $\Phi$, to obtain spanning vector fields in $x_0$. This process will terminate after finitely many steps, since it reduces the dimension of the connected component of $A$ that contains $x_1$ in every step, and since there are only finitely many connected components.
\item
Let $(X_0,Y_0) \in \camo{n}$ be as above with the same coordinate neighborhood. Flexibility implies straight-forward homogeneity. To prove tangential semi-homogeneity, we need to find a tangent vector that is a generating set.
For $k \in \NN_0$, the functions
\begin{align*}
F_k(X, Y) &= (X + ( \mathop{\mathrm{tr}} Y - \mathop{\mathrm{tr}} Y_0 ) \cdot Y^k, Y) \\
G_k(X, Y) &= (X , Y + ( \mathop{\mathrm{tr}} X - \mathop{\mathrm{tr}} X_0 ) \cdot X^k)
\end{align*}
are well-defined and fix the point $(X_0, Y_0) \in \camo{n}$.
For their derivatives we obtain
\begin{align*}
d_{(X_0,Y_0)} F_k &= \begin{pmatrix}
 \id & d_{Y_0} \mathop{\mathrm{tr}} Y \cdot Y_0^k \\
   0 & \id 
\end{pmatrix} \\
d_{(X_0,Y_0)} G_k &= \begin{pmatrix}
 \id & 0 \\
   d_{X_0} \mathop{\mathrm{tr}} X \cdot X_0^k & \id 
\end{pmatrix}
&= \begin{pmatrix}
 \id & 0 \\
  \mathop{\mathrm{diag}}(\lambda_1^k, \dots, \lambda_n^k) & \id 
\end{pmatrix}
\end{align*}
Any vector $w \in T_{(X_0,Y_0)}\camo{n}$ whose projections to each of the one-dimensional subspaces spanned by $\frac{\partial}{\partial \lambda_k}, \frac{\partial}{\partial \alpha_k}$, $k = 1, \dots, n$ is non-trivial, will be a generating set. \qedhere
\end{enumerate}
\end{proof}

\begin{cor}
The algebraic automorphism group of $\camo{n}$ acts infinitely transitively on $\camo{n}$.
\end{cor}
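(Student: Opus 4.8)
The plan is to deduce this directly by combining Proposition~\ref{propflex} with the characterization of flexibility recorded in Theorem~\ref{thmflextrans}, so essentially no new work is required. First I would verify the hypotheses of Theorem~\ref{thmflextrans}: as recalled in the introduction (following Wilson \cite{wilson}*{Section~1}), the space $\camo{n}$ is a smooth irreducible complex affine-algebraic variety of dimension $2n$. Smoothness gives $(\camo{n})_{\mathrm{reg}} = \camo{n}$, and since $2n \geq 2$ for every $n \geq 1$, the variety meets the dimension requirement as well.

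Next I would invoke Proposition~\ref{propflex}(1), which asserts that $\camo{n}$ is flexible. This is exactly condition (3) of Theorem~\ref{thmflextrans}, so the equivalence $(3) \Leftrightarrow (2)$ yields that the group $\mathrm{SAut}(\camo{n})$ of special automorphisms acts infinitely transitively on $(\camo{n})_{\mathrm{reg}} = \camo{n}$.

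Finally, since $\mathrm{SAut}(\camo{n})$ is by definition a subgroup of the full algebraic automorphism group of $\camo{n}$, every $m$-transitive action of the subgroup is in particular an $m$-transitive action of the ambient group; hence $m$-transitivity for all $m \in \NN$ passes upward, and the algebraic automorphism group acts infinitely transitively on $\camo{n}$.

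I do not anticipate any genuine obstacle: the entire mathematical content has already been supplied by Proposition~\ref{propflex} and the cited Theorem~\ref{thmflextrans}. The only points that warrant an explicit sentence are the verification of the smoothness and dimension hypotheses (so that the full space, rather than merely its regular locus, is covered) and the remark that infinite transitivity of the special automorphism subgroup automatically transfers to the larger automorphism group.
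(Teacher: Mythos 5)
Your proposal is correct and matches the paper's intended argument exactly: the paper states this corollary without a separate proof precisely because it follows immediately from Proposition~\ref{propflex} (flexibility) together with Theorem~\ref{thmflextrans}, with smoothness ensuring $(\camo{n})_{\mathrm{reg}} = \camo{n}$ and the inclusion $\mathrm{SAut}(\camo{n}) \subseteq \mathrm{Aut}(\camo{n})$ passing infinite transitivity to the full algebraic automorphism group. Your explicit verification of the dimension and smoothness hypotheses is a sensible touch, but introduces nothing beyond what the paper implicitly relies on.
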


\begin{remark}
This corollary does not make use of the two-transitivity results of Berest--Eshmatov--Eshmatov \cite{BEE}, but only of the flexibility (and hence implicitly of the transitivity result). This corollary is also implied by a recently established result of Kuyumzhiyan \cite{kuyu} where they prove that a smaller group, which is a proper subgroup of the algebraic automorphism group, already acts infinitely transitively on $\camo{n}$.
\end{remark}

\section{Density property}
\label{secdens}

The density property was introduced by Varolin \cites{Varolin1,Varolin2} in order to describe that the holomorphic automorphism group of a complex manifold is large. The most important consequence is the so-called Anders\'en--Lempert Theorem. Among many interesting geometric consequences it implies that for a Stein manifold with the density property, the group of holomorphic automorphisms acts infinitely transitively.

\begin{definition}[\cite{Varolin1}]
Let $M$ be Stein manifold. We say that $M$ enjoys the \emph{density property} if the Lie algebra that is generated by all complete holomorphic vector fields on $M$ is dense (w.r.t.\ compact-open topology) in the Lie algebra of all holomorphic vector fields on $M$.
\end{definition}

In practice it is often simpler to fulfil the following, more algebraic condition, which then implies the density property:

\begin{definition}[\cite{Varolin1}]
Let $M$ be complex affine-algebraic manifold. We say that $M$ enjoys the \emph{algebraic density property} if the Lie algebra that is generated by all complete polynomial vector fields on $M$ is agrees with the Lie algebra of all polynomial vector fields on $M$.
\end{definition}

There is a close relation between flexibility and density property, in particular due to the fact that locally nilpotent derivations are complete polynomial vector fields. So far, all known examples of complex affine-algebraic manifolds that are flexible, also enjoy the algebraic density property. However, not all manifolds with the algebraic density property are flexible, a counter\-example is $\CC \times \CC^\ast$.

\begin{theorem}[Anders\'en--Lempert \cite{AL2}, Forstneri\v{c}--Rosay \cites{FR,FR-err}, Varolin \cite{Varolin1}]
\label{andlemp}
Let $M$ be a Stein manifold with the density property and with a distance function $d$. Let $\Omega \subseteq M$ be a Stein open subset and $\varphi \colon [0,1] \times \Omega \to M$ be a continuously differentiable map such that
\begin{enumerate}
\item $\varphi_0 \colon \Omega \to M$ is the natural embedding,
\item $\varphi_t \colon \Omega \to M$ is holomorphic and injective for every $t \in [0,1]$,
\item {$\varphi_t(\Omega)$ is a Runge subset of $M$ for every $t \in [0,1]$.}
\end{enumerate}
Then for every $\varepsilon > 0$ and for every compact $K \subset \Omega$ there exists a continuous family $\Phi \colon [0, 1] \to \mathrm{Aut}(M)$
with $\Phi_0 = \id_M$ and \[
\sup_{x \in K} d( \varphi_t(x), \Phi_t(x) ) < \varepsilon \] for every $t \in [0,1]$.

\smallskip
Moreover, these automorphisms can be chosen to be compositions of flows of completely integrable generators of any Lie subalgebra of holomorphic vector fields that is dense in the Lie algebra of all holomorphic vector fields on $M$.
\end{theorem}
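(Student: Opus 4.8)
The plan is to reduce the statement to the approximation of a time-dependent holomorphic vector field, defined only on the moving domain $\varphi_t(\Omega)$, by global fields whose flows are automorphisms of $M$. First I would extract from the isotopy $\varphi_t$ its infinitesimal generator: since each $\varphi_t$ is holomorphic and injective, the formula $V_t(\varphi_t(x)) := \frac{\partial}{\partial t}\varphi_t(x)$ defines a continuous family of holomorphic vector fields $V_t$ on the open sets $\varphi_t(\Omega) \subseteq M$, and $\varphi_t$ is precisely the non-autonomous flow of $V_t$ starting from the inclusion $\varphi_0$. Thus it suffices to approximate this non-autonomous flow by a continuous family of genuine automorphisms.

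The second step is to globalize and then algebraize $V_t$. Because $\varphi_t(\Omega)$ is a Runge subset of the Stein manifold $M$, the field $V_t$ can be approximated, uniformly on compact subsets and (using compactness of $[0,1]$ together with the assumed continuous dependence) uniformly in $t$, by genuinely global holomorphic vector fields on $M$. At this point the density property enters: the dense Lie subalgebra generated by complete fields lets me further approximate each global field by an element of that subalgebra. The payoff is that I now have a time-dependent field lying in a Lie algebra generated by \emph{complete} fields, whose individual flows are honest automorphisms of $M$.

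The third and technically central step is to integrate this approximating non-autonomous field by a composition of automorphisms, an ``Euler method in $\mathrm{Aut}(M)$''. I would partition $[0,1]$ into short subintervals and, on each of them, approximate the flow of the time-dependent field by the short-time flow of a fixed field in the dense subalgebra. Such a field need not itself be complete, but a field in the Lie algebra generated by complete fields can be realized up to small error as a composition of flows of the complete generators, via the Lie--Trotter formula $\exp(t(V+W)) \approx \exp(tV)\exp(tW)$ and the commutator formula $\exp(t^2[V,W]) \approx \exp(tV)\exp(tW)\exp(-tV)\exp(-tW)$ for small $t$. Composing all the resulting pieces yields the desired continuous family $\Phi_t \in \mathrm{Aut}(M)$, and the final ``moreover'' clause about completely integrable generators of any dense subalgebra falls directly out of this construction.

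I expect the main obstacle to be the uniform error control across the entire time interval, carried out through a Gronwall-type estimate: each local approximation introduces an error, and one must show that the accumulated deviation $\sup_{x \in K} d(\varphi_t(x), \Phi_t(x))$ stays below $\varepsilon$ for all $t$ simultaneously. This forces the approximating fields, their flows, and the relevant compact sets to remain within controlled neighborhoods throughout the deformation, so that the Lipschitz constants governing the ODE comparison stay bounded. The Runge hypothesis is exactly what keeps the globalization of step two available along the whole isotopy, and is therefore indispensable to closing this estimate.
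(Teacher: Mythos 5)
This theorem is not proved in the paper at all: it is imported verbatim from the cited works of Anders\'en--Lempert, Forstneri\v{c}--Rosay, and Varolin, so there is no in-paper argument to compare against. Your outline is a faithful reconstruction of the standard proof in those references --- extracting the time-dependent generator $V_t$ of the isotopy, globalizing it via the Runge and Stein hypotheses, invoking the density property to pass to the Lie algebra generated by complete fields, realizing such fields by compositions of flows via the Lie--Trotter and commutator formulas over a fine partition of $[0,1]$, and closing the argument with a Gronwall-type error estimate --- so it is correct and follows essentially the same route as the proof the paper relies on by citation.
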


\begin{cor}[\cite{Varolin1}]
Let $M$ be a Stein manifold with the density property. 
Then the natural action of the group of holomorphic automorphisms of $M$ on $M$ is infinitely transitive.
\end{cor}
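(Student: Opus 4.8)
The plan is to establish $m$-transitivity for every fixed $m \in \NN$, which by definition yields infinite transitivity. I would work in the ordered configuration space
\[
F_m(M) := \{ (x_1,\dots,x_m) \in M^m \;:\; x_i \neq x_j \text{ for } i \neq j \},
\]
which is connected because $M$ is, and on which $\mathrm{Aut}(M)$ acts diagonally. It suffices to show that each orbit of this action is open: since the orbits partition the connected space $F_m(M)$, openness forces a single orbit, i.e.\ $m$-transitivity.

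To prove that the orbit of a configuration $P = (a_1,\dots,a_m)$ is open, I would fix a nearby configuration $b = (b_1,\dots,b_m)$ with each $b_i$ close to $a_i$. Choosing disjoint small Stein (ball-like) coordinate neighborhoods $U_i \ni a_i$, I would build a local holomorphic isotopy $\varphi_t \colon \Omega \to M$ on $\Omega := \bigsqcup_i U_i$ that carries each $U_i$ along, with $\varphi_0$ the inclusion, each $\varphi_t$ injective, and $\varphi_1(a_i) = b_i$ exactly. For $b$ close to $P$ the sets $\varphi_t(\Omega)$, $t \in [0,1]$, are disjoint neighborhoods of the finite, hence $\mathcal{O}(M)$-convex, point set $\{\varphi_t(a_i)\}$; taking the $U_i$ small enough makes each $\varphi_t(\Omega)$ a Runge Stein subset of $M$, so hypotheses (1)--(3) of Theorem~\ref{andlemp} hold. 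The theorem then yields an automorphism $\Phi^b \in \mathrm{Aut}(M)$ with $\Phi^b(a_i)$ within any prescribed $\varepsilon$ of $\varphi_1(a_i) = b_i$, and this construction can be carried out continuously in the parameter $b$.

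Finally I would upgrade this approximate control to exact transitivity. The assignment $b \mapsto (\Phi^b(a_1),\dots,\Phi^b(a_m)) \in F_m(M)$ is a continuous self-map of a neighborhood of $P$ that is $\varepsilon$-close to the identity; a standard Brouwer-type surjectivity argument then shows that every configuration in some smaller neighborhood of $P$ equals $(\Phi^b(a_i))_i$ for a suitable $b$, hence lies in the orbit of $P$. This proves the orbit is open. Transitivity on all of $F_m(M)$ follows by joining two arbitrary configurations by a path, covering it with finitely many such neighborhoods, and composing the resulting automorphisms.

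I expect two points to be delicate. The first is verifying the Runge hypothesis (3): this is what dictates working with small neighborhoods of the finite, and therefore holomorphically convex, moving point sets, so that the transported neighborhoods remain Runge throughout the isotopy. The second, and genuine crux, is the passage from the approximate interpolation supplied by the Andersen--Lempert theorem to exact interpolation; this is where the continuous dependence of $\Phi^b$ on the target and the near-identity surjectivity argument are essential, and where the density property does its real work through the ``moreover'' clause of Theorem~\ref{andlemp}.
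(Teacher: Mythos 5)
The paper itself gives no proof of this corollary at all --- it is quoted from Varolin's paper --- so your attempt must be measured against the standard argument in the literature. Your skeleton is the standard one (work in the configuration space $F_m(M)$, show orbits of the diagonal $\mathrm{Aut}(M)$-action are open, conclude by connectedness), and your treatment of the Runge hypothesis (small disjoint balls around a finite, hence $\mathcal{O}(M)$-convex, set) is fine. The genuine gap sits exactly at the point you yourself call the crux: the assertion that the approximating automorphism $\Phi^b$ supplied by Theorem~\ref{andlemp} ``can be carried out continuously in the parameter $b$.'' Theorem~\ref{andlemp} is a pure existence statement for a single isotopy; it gives no relation whatsoever between the automorphisms it produces for two different targets $b$ and $b'$, however close. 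Without continuity of $b \mapsto (\Phi^b(a_1),\dots,\Phi^b(a_m))$ the Brouwer degree argument cannot even be started, and establishing that continuity amounts to proving a \emph{parametric} Anders\'en--Lempert theorem --- a genuinely stronger statement requiring one to re-run the whole approximation scheme with uniform control in the parameter. (Such parametric versions do exist in the literature, but they are not the theorem quoted here.) Nor can you sidestep the issue by fixing a single $b$: correcting the residual error $\Phi^b(a_i) \mapsto b_i$ exactly is precisely the local exact transitivity you are trying to prove, so that route is circular.

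It is worth knowing the standard proof, which avoids both the approximation theorem and degree theory. By the density property, the Lie algebra $\mathfrak{g}$ generated by the complete holomorphic vector fields is dense in the Lie algebra of all holomorphic vector fields. Evaluation at the distinct points $a_1,\dots,a_m$, namely $\Theta \mapsto (\Theta(a_1),\dots,\Theta(a_m)) \in \bigoplus_{i} T_{a_i}M$, is linear, continuous, and surjective on the space of all holomorphic fields (by Cartan's theorems one can prescribe values of a holomorphic field at finitely many points); hence its restriction to the dense subalgebra $\mathfrak{g}$ has dense image in a finite-dimensional space, i.e.\ is surjective as well. One then shows that the orbit of $(a_1,\dots,a_m)$ under the group generated by flows of complete fields is open: compositions of such flows realize, to leading order, the evaluations of complete fields, the usual commutator trick realizes evaluations of their Lie brackets, and a smooth map whose differential at a point is surjective has image containing a neighborhood of the image point. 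This gives \emph{exact} local transitivity with no correction step, and connectedness of $F_m(M)$ finishes the proof. Your approach is repairable by invoking a parametric Anders\'en--Lempert theorem, but as written it has a hole at its central step.
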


\bigskip

A crucial contribution for proving the algebraic density property are the following definition and theorem of Kaliman and Kutzschebauch:

\begin{definition}[\cite{densitycriteria}*{Definition~2.5}]
\label{defcompat}
Let $\Theta_1$ and $\Theta_2$ be non-trivial polynomial vector fields on an complex affine-algebraic manifold $M$ such that $\Theta_1$ is a locally nilpotent derivation on $\CC[M]$, and $\Theta_2$ is either also locally nilpotent or semi-simple. That is, $\Theta_j$ generates an algebraic action of $H_j$ on $M$
where $H_1 \cong \CC^+$ and $H_2$ is isomorphic to either $\CC^+$ or $\CC^\ast$. We say that $\Theta_1$ and $\Theta_2$ form a \emph{compatible pair} if (i) the vector space $\mathrm{span}(\ker \Theta_1 \cdot \ker \Theta_2)$ contains a nonzero ideal in $\CC[M]$ and (ii) some element $a \in \ker \Theta_2$ is of degree $1$ with
respect to $\Theta_1$, i.e.\ $\Theta_1(a) \in \ker \Theta_1 \setminus \{0\}$.
\end{definition}

\begin{theorem}[\cite{densitycriteria}*{Theorem~2, and Remark~2.7}]
\label{thmcompatpair}
Let $M$ be a complex-affine algebraic manifold that
\begin{enumerate}
\item is tangentially semi-homogenous, and
\item admits a compatible pair,
\end{enumerate}
then $M$ enjoys the algebraic density property.
\end{theorem}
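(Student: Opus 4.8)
The plan is to show that the Lie algebra $\mathcal{L}$ generated by all complete polynomial vector fields on $M$ is not merely dense in, but equal to, the module $\mathrm{VF}(M)$ of all polynomial vector fields; since the reverse inclusion is trivial, this is the algebraic density property. The guiding principle is that $\mathcal{L}$, although a priori only a Lie algebra over $\CC$, contains large $\CC[M]$-submodules, and these can be enlarged in two stages: from the compatible pair I first extract a rank-one submodule $I\,\Theta_1$ supported in a single direction, then tangential semi-homogeneity is used to rotate that direction so as to fill out the whole tangent space, and finally (algebraic) homogeneity removes the remaining exceptional locus. Throughout I use that $\mathcal{L}$ is invariant under every algebraic automorphism $\Phi$ of $M$: such a $\Phi$ carries complete fields to complete fields and respects the bracket, so $\Phi_*\mathcal{L}=\mathcal{L}$.

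First I record the basic building blocks, the \emph{replicas}. If $\Theta_1$ is locally nilpotent and $f\in\ker\Theta_1$, then $(f\Theta_1)^k=f^k\Theta_1^k$, so $f\Theta_1$ is again locally nilpotent and hence complete; likewise, if $\Theta_2$ is locally nilpotent or semi-simple and $g\in\ker\Theta_2$, then $g\Theta_2$ is complete, its flow being that of $\Theta_2$ run for the orbitwise-constant time $g$. Thus $\ker\Theta_1\cdot\Theta_1\subseteq\mathcal{L}$ and $\ker\Theta_2\cdot\Theta_2\subseteq\mathcal{L}$, and since $\ker\Theta_1,\ker\Theta_2$ are subrings, products such as $ag$ with $a,g\in\ker\Theta_2$ again yield replicas of $\Theta_2$ lying in $\mathcal{L}$.

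The heart of the proof, and the step I expect to be the main obstacle, is to promote these replicas to a genuine $\CC[M]$-submodule of $\mathcal{L}$ in the single direction $\Theta_1$. Here condition (ii) is decisive: the element $a\in\ker\Theta_2$ with $\Theta_1(a)=b\in\ker\Theta_1\setminus\{0\}$ has $\Theta_1$-degree one (equivalently $s:=a/b$ is a slice, $\Theta_1 s=1$ after localizing at $b$), and this is exactly what lets one cancel the ``$\Theta_2$-leakage'' in the brackets. Computing the brackets of the replicas $f\Theta_1$, $g\Theta_2$ and $ag\Theta_2$ (with $f\in\ker\Theta_1$, $g\in\ker\Theta_2$) and iterating $\mathrm{ad}_{\Theta_1}$, one eliminates both the $\Theta_2$-component and the unknown bracket $[\Theta_1,\Theta_2]$, retaining only terms of the form $fg\,\Theta_1$. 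Hence $\mathrm{span}(\ker\Theta_1\cdot\ker\Theta_2)\cdot\Theta_1\subseteq\mathcal{L}$, and by condition (i) this span contains a nonzero ideal $I\subseteq\CC[M]$, so that $I\,\Theta_1\subseteq\mathcal{L}$. When $\Theta_2$ is semi-simple rather than locally nilpotent, the same bookkeeping must be carried out on the weight decomposition of $\CC[M]$ under the induced $\CC^\ast$-action; this is the content of the cited Remark~2.7 and is where I expect the computation to be most delicate.

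Finally I globalize. Taking automorphism-pushforwards and $\CC$-linear combinations, the set $\mathcal{M}:=\sum_{\Phi}\Phi_*(I\Theta_1)$ is again a $\CC[M]$-submodule of $\mathcal{L}$ (each $\Phi_*(f\Theta_1)\in\mathcal{L}$ and ideals are closed under multiplication), and it is $\mathrm{Aut}$-invariant. Using homogeneity, a dominance argument shows that the automorphism-orbit of the field $\Theta_1$ attains, as a value $\Phi_*\Theta_1(x_0)$ with coefficient ideal non-vanishing at $x_0$, a \emph{generating} vector $v$ at the point $x_0$ furnished by tangential semi-homogeneity, since such vectors form a dense Zariski-open subset of $T_{x_0}M$. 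Applying the isotropy group $G_{x_0}$ and the module structure, the evaluation $\mathrm{ev}_{x_0}(\mathcal{M})$ then contains the $G_{x_0}$-orbit of $v$, which by definition spans $T_{x_0}M$; hence $\mathcal{M}$ has full rank on a dense Zariski-open set, i.e.\ there is a nonzero ideal $J$ with $J\cdot\mathrm{VF}(M)\subseteq\mathcal{M}\subseteq\mathcal{L}$. Since the automorphism group acts transitively and $V(J)\neq M$, the closed sets $\Phi(V(J))$ have empty intersection, so finitely many pushforward ideals $(\Phi_i)_*J$ already generate the unit ideal. As each $(\Phi_i)_*\big(J\cdot\mathrm{VF}(M)\big)=\big((\Phi_i)_*J\big)\cdot\mathrm{VF}(M)\subseteq\mathcal{L}$, we conclude $\mathrm{VF}(M)=\big(\sum_i(\Phi_i)_*J\big)\cdot\mathrm{VF}(M)\subseteq\mathcal{L}$, which is the algebraic density property.
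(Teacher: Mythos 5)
The paper itself contains no proof of this theorem --- it is imported verbatim from Kaliman--Kutzschebauch \cite{densitycriteria} (Theorem~2 together with Remark~2.7) --- so your proposal can only be measured against the original argument there. Your two-stage architecture (extract from the compatible pair a nonzero $\CC[M]$-submodule of the Lie algebra $\mathcal{L}$ generated by complete polynomial fields; then globalize via a generating vector, isotropy-invariance, homogeneity, and a Nullstellensatz argument) is indeed the architecture of \cite{densitycriteria}, and your replica bookkeeping and the final partition-of-unity step are sound. But the two places you yourself flag as delicate are genuine gaps, not just deferred computations.

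First, the module-extraction step is asserted rather than proved, and as stated it is wrong in both mechanism and outcome. For $f\in\ker\Theta_1$, $g\in\ker\Theta_2$ the relevant identity is $[f\Theta_1,\,ag\Theta_2]=fg\,b\,\Theta_2+a\,[f\Theta_1,\,g\Theta_2]$ with $b=\Theta_1(a)$, and the obstruction is the last term: $\mathcal{L}$ is only a Lie algebra over $\CC$, so multiplying the bracket $[f\Theta_1,g\Theta_2]$ --- which is not a replica of anything --- by the function $a$ is not an admissible operation, and iterating $\mathrm{ad}_{\Theta_1}$, as you propose, only regenerates terms of the same shape; it eliminates nothing. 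The actual resolution is a different, one-line observation: $af\Theta_1$ is itself a complete polynomial field, being an \emph{overshear} of $\Theta_1$ (indeed $\Theta_1^2(af)=0$ because $\Theta_1(af)=bf\in\ker\Theta_1$), and since $\Theta_2(a)=0$ one computes exactly $[af\Theta_1,\,g\Theta_2]=a\,[f\Theta_1,\,g\Theta_2]$. Hence $fg\,b\,\Theta_2=[f\Theta_1,\,ag\Theta_2]-[af\Theta_1,\,g\Theta_2]\in\mathcal{L}$, and condition (i) then yields the nonzero submodule $I\,b\,\Theta_2\subseteq\mathcal{L}$. Note the direction: the module rides on $\Theta_2$, with the extra coefficient $b$, not on $\Theta_1$ as you claim; condition (ii) makes $a$ a degree-one modulator of $\Theta_2$-replicas which $\Theta_1$ differentiates, and there is no mechanism producing terms $fg\,\Theta_1$. (This computation also treats the locally nilpotent and semi-simple cases of $\Theta_2$ uniformly, since the replicas $g\Theta_2$ and $ag\Theta_2$ are complete either way.)

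Second, your ``dominance argument'' in the globalization does not close. Density and Zariski-openness of the set of generating vectors in $T_{x_0}M$ is irrelevant without an argument that the attainable vectors $\Phi_*\Theta_2(x_0)$ (with coefficient ideal non-vanishing at $x_0$) do not all lie in its closed, conical, isotropy-invariant complement. What the module structure actually gives is that the fiber $V:=\mathrm{ev}_{x_0}(\mathcal{M})$ is a nonzero isotropy-invariant linear subspace; but a nonzero invariant subspace can be proper even when a one-vector generating set exists --- for a torus acting on $\CC^2$ by two distinct characters, each axis is invariant while every vector off the axes is generating. So you still owe the statement that $V$ contains a generating vector; this is precisely the hypothesis of Theorem~1 of \cite{densitycriteria}, and supplying it from tangential semi-homogeneity is the additional input the citation of Remark~2.7 is meant to provide, not a generic-position argument. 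Everything downstream of that point in your write-up (transitivity, the unit ideal generated by finitely many pushforwards of $J$, and $J\cdot\mathrm{VF}(M)\subseteq\mathcal{L}$) is correct.
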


\begin{example} Known examples of complex manifolds with the algebraic density property include the following.
\begin{enumerate}
\item Homogeneous spaces of linear algebraic groups $G/H$ that are affine and whose connected components are different from $\CC$ or $(\CC^\ast)^n$, see \cite{MR3623226}.
\item Smooth Danielewski surfaces $\{ (x,y,z) \in \CC^3 \,:\,  x \cdot y - p(z) = 0 \}$ where $p$ is a polynomial with simple roots, see \cite{MR2350038}.
\item A family of cubics including the Koras--Russel cubic threefold, see \cite{MR3513546}.
\item Gizatullin surfaces with a reduced degenerate fibre and a transitive action of the algebraic automorphism group, see \cite{MR3833804}.
\end{enumerate}
\end{example}

\begin{theorem}
\label{thmdens}
The Calogero--Moser spaces $\camo{n}$ have the algebraic density property.
\end{theorem}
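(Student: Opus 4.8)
The plan is to deduce the statement directly from the compatible-pair criterion of Theorem~\ref{thmcompatpair}. Since $\camo{n}$ is a smooth affine variety and, by Proposition~\ref{propflex}(2), tangentially semi-homogeneous, the first hypothesis of that theorem is already in hand, and it remains only to exhibit a single compatible pair of vector fields in the sense of Definition~\ref{defcompat}. I would draw both members of the pair from the Calogero--Moser flows already introduced, taking the two locally nilpotent derivations
\[
\Theta_1 = X^0 \tfrac{\partial}{\partial Y}, \qquad \Theta_2 = Y^0 \tfrac{\partial}{\partial X},
\]
that is, the complete $\CC^{+}$-flows $(X,Y) \mapsto (X, Y + t\,\id)$ and $(X,Y) \mapsto (X + t\,\id, Y)$. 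Both descend to $\camo{n}$ because adding a scalar matrix leaves the commutator $[X,Y]$ unchanged.

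Condition (ii) of Definition~\ref{defcompat} should be immediate: the function $a = \mathop{\mathrm{tr}} Y$ lies in $\ker \Theta_2$, as it is unaffected by $X \mapsto X + t\,\id$, while $\Theta_1(a) = \mathop{\mathrm{tr}} \id = n$ is a nonzero constant and hence a nonzero element of $\ker \Theta_1$. Thus $a$ has degree $1$ with respect to $\Theta_1$.

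The genuine work is condition (i), namely that $\mathrm{span}(\ker \Theta_1 \cdot \ker \Theta_2)$ contains a nonzero ideal; in fact I expect to show that this span is all of $\CC[\camo{n}]$, so that it contains the unit ideal. The key input is that, since $\glgrp_n(\CC)$ is reductive and $\widetilde{\camo{n}}$ is closed in $\mat{n \times n}{\CC} \times \mat{n \times n}{\CC}$, the classical first fundamental theorem for matrices under simultaneous conjugation shows that $\CC[\camo{n}]$ is generated as an algebra by traces of words in $X$ and $Y$. Writing $X = X_0 + \tfrac1n (\mathop{\mathrm{tr}} X)\,\id$ and $Y = Y_0 + \tfrac1n (\mathop{\mathrm{tr}} Y)\,\id$ with traceless parts $X_0, Y_0$, I would expand any such trace into a linear combination of terms $(\mathop{\mathrm{tr}} X)^a (\mathop{\mathrm{tr}} Y)^b \cdot \mathop{\mathrm{tr}} w(X_0, Y_0)$, the central factors $\tfrac1n(\mathop{\mathrm{tr}} X)\,\id$ and $\tfrac1n(\mathop{\mathrm{tr}} Y)\,\id$ pulling out of the trace. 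The point is that $\mathop{\mathrm{tr}} w(X_0, Y_0) \in \ker \Theta_1 \cap \ker \Theta_2$, since neither scalar translation alters $X_0$ or $Y_0$, whereas $(\mathop{\mathrm{tr}} X)^a \in \ker \Theta_1$ and $(\mathop{\mathrm{tr}} Y)^b \in \ker \Theta_2$. Regrouping each term as $\big[(\mathop{\mathrm{tr}} X)^a \, \mathop{\mathrm{tr}} w(X_0, Y_0)\big] \cdot (\mathop{\mathrm{tr}} Y)^b$ exhibits every trace of a word as an element of $\ker \Theta_1 \cdot \ker \Theta_2$; because both kernels are subalgebras, the same regrouping applied factorwise extends this to arbitrary products of traces, so the span is all of $\CC[\camo{n}]$.

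I expect condition (i) to be the only real obstacle: semi-homogeneity is supplied by Proposition~\ref{propflex} and condition (ii) is a one-line check, while (i) depends on combining the correct description of $\CC[\camo{n}]$ (generation by traces) with the central-decomposition trick that cleanly separates the $X$- and $Y$-translation invariants. With the compatible pair established, Theorem~\ref{thmcompatpair} delivers the algebraic density property of $\camo{n}$.
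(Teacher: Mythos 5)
Your proof is correct, but it takes a genuinely different route from the paper's --- and, interestingly, it settles something the paper explicitly leaves open. The paper takes the pair of flows $(X,Y)\mapsto(X+tY,\,Y)$ and $(X,Y)\mapsto(X,\,Y+sX)$, observes that they embed as the two unipotent subgroups of an $\slgrp_2(\CC)$-action $(X,Y)\mapsto(a_{11}X+a_{12}Y,\,a_{21}X+a_{22}Y)$ which descends to $\camo{n}$ because it preserves $[X,Y]$, and then invokes \cite{densitycriteria}*{Lemma~3.6}, which guarantees that such a pair is automatically compatible --- no computation of kernels at all. You instead take the scalar translations $(X,Y)\mapsto(X+t\,\id,\,Y)$ and $(X,Y)\mapsto(X,\,Y+s\,\id)$ and check condition (i) of Definition~\ref{defcompat} by hand, using that $\CC[\camo{n}]$ is generated by traces of words in $X,Y$ (Procesi's first fundamental theorem for matrix invariants combined with surjectivity of restriction of invariants for the reductive group $\glgrp_n(\CC)$) together with the decomposition into traceless and scalar parts, which cleanly splits each trace monomial into a $\ker\Theta_1$-factor times a $\ker\Theta_2$-factor. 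This is precisely the pair that the paper, in the remark following Theorem~\ref{thmdens}, only ``believes'' to form a compatible pair, stating that ``more work would be needed to show that their kernels are big enough'' and verifying it only for $n=2$ in Section~\ref{secexample}; your argument supplies that missing work for all $n$. The trade-off: the paper's route is shorter and avoids invariant theory entirely, delegating the kernel condition to the $\slgrp_2$-lemma, while yours is more hands-on and self-contained at the level of the compatibility check, at the price of relying on the (classical, and correctly invoked) matrix-invariant theorem. Two small points you should patch: justify that $\widetilde{\camo{n}}$ is closed in $\mat{n \times n}{\CC} \times \mat{n \times n}{\CC}$ --- this holds because $\mathop{\mathrm{tr}}([X,Y]+\id)=n\neq 0$ forces $[X,Y]+\id\neq 0$, so the rank-one locus coincides with the closed locus $\rank([X,Y]+\id)\leq 1$ --- and note that $\Theta_1,\Theta_2$ are non-trivial as vector fields on $\camo{n}$, which follows already from your own computation $\Theta_1(\mathop{\mathrm{tr}} Y)=n\neq 0$ applied to the invariant function $\mathop{\mathrm{tr}} Y$.
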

\begin{proof}
Since it has already been established that $\camo{n}$ is tangentially semi-homogeneous in Proposition \ref{propflex}, we only need to find a compatible pair in order to apply Theorem \ref{thmcompatpair} above.

\smallskip

We consider two complete $\CC^+$-actions on $\camo{n}$ with times $t, s$, respectively:
\[
(X,Y) \mapsto (X + t \cdot Y,Y), \qquad (X,Y) \mapsto (X,Y + s \cdot X)
\]
These actions are non-trivial on $\camo{n}$ since they change the eigenvalues and therefore the conjugacy classes of the representatives $(X,Y)$.
In order to show that they form a compatible pair, we embed those two actions into a $\slgrp_2(\CC)$-action\footnote{Unfortunately, the quotient of an affine-algebraic variety by $\CC^+$-action does not need to be affine. However this is always true for quotients of affine varieties by reductive subgroups where moreover geometric and categorical quotients agree. This then also holds for quotients of $\CC^+$, as long as this $\CC^+$ is a subgroup of a reductive group that is acting. See \cite{LND} for details.} on $\mat{n \times n}{\CC} \times \mat{n \times n}{\CC}$:
\[
\begin{pmatrix}
a_{11} & a_{12} \\
a_{21} & a_{22}
\end{pmatrix}
\cdot (X, Y) = ( a_{11} X + a_{12} Y, a_{21} X + a_{22} Y )
\]
Since $[a_{11} X + a_{12} Y, a_{21} X + a_{22} Y] = (a_{11} a_{22} - a_{12} a_{21}) \cdot [X, Y] = [X,Y]$ and $a_{11} G X G^{-1} + a_{12} G Y G^{-1} = G (a_{11} X + a_{12} Y ) G^{-1}$ as well as $a_{21} G X G^{-1} + a_{22} G Y G^{-1} = G (a_{21} X + a_{22} Y ) G^{-1}$, this action descends to $\camo{n}$. The $\CC^+$-actions in question are then embedded as subgroups 
\begin{align*}
&\begin{pmatrix}
1 & t \\ 0 & 1
\end{pmatrix}, \; t \in \CC \\
&\begin{pmatrix}
1 & 0 \\ s & 1
\end{pmatrix}, \; s \in \CC, \quad \text{respectively.}
\end{align*}
Thus, the vector fields corresponding to these two $\CC^+$-actions above form a compatible pair according to \cite{densitycriteria}*{Lemma~3.6}.
\end{proof}

\begin{remark}
We believe that the following vector fields also form a compatible pair on $\camo{n}$:
\[
(X,Y) \mapsto (X + t \cdot \id,Y), \qquad (X,Y) \mapsto (X,Y + s \cdot \id)
\]
Since they are not embedded in a $\slgrp_2(\CC)$ action, more work would be needed to show that their kernels are big enough. In Section \ref{secexample}, we show that these two vector fields indeed form a compatible pair for $n = 2$.
\end{remark}

As a consequence of Theorem \ref{andlemp}, we obtain a description of the holomorphic automorphisms.

\begin{definition}
Let $M$ be a complex manifold and let $\Theta$ be a complete holomorphic vector field on $M$ with flow map $\varphi_t$. Let $f \colon M \to \CC$ be a holomorphic function.
\begin{enumerate}
\item If $f \in \ker \Theta$, then we call the vector field $f \cdot \Theta$ a \emph{shear (vector field)} of $\Theta$, and we call the corresponding flow map a \emph{shear} of $\varphi_t$.
\item If $f \in \ker (\Theta \circ \Theta)$, but  $f \notin \ker \Theta$, then we call the vector field $f \cdot \Theta$ an \emph{overshear (vector field)} of $\Theta$, and we call the corresponding flow map an \emph{overshear} of $\varphi_t$.
\end{enumerate}
\end{definition}

\begin{remark}
For a complete vector field $\Theta$, the shear or overshear vector field $f \cdot \Theta$ is complete as well \cite{shear}. If $\Theta$ is a locally nilpotent derivation and $f \in \ker \Theta$ a regular function, then the shear $f \cdot \Theta$ is a locally nilpotent derivation as well, and sometimes called a \emph{replica} of $\Theta$ \cite{A-Z}*{Example~1.19}. The flow maps can be given explicitly \cite{fibred}*{Lemma~3.3}.
\begin{enumerate}
\item If $\Theta(f) = 0$, then the flow is given by $\varphi_{\displaystyle t \cdot f(x)}(x)$.
\item If $\Theta(\Theta(f)) = 0$, then the flow is given by $\varphi_{\displaystyle \varepsilon(t \Theta_x f) \cdot t \cdot f(x)}(x)$ where $\varepsilon \colon \CC \to \CC$ is given by $\varepsilon(\zeta) = \sum_{k=1}^\infty \frac{\zeta^{k-1}}{k!} = \frac{e^\zeta - 1}{\zeta}$
\end{enumerate}
\end{remark}

\begin{theorem}
\label{camoautos}
Let $\varphi$ be a holomorphic automorphism of $\camo{n}$ that is connected to the identity by a continuously (real) differentiable path of holomorphic automorphisms of $\camo{n}$. Then $\varphi$ can be approximated, uniformly on compacts of $\camo{n}$, by compositions of automorphisms of the form $(X, Y) \mapsto (X, Y + p(X))$ and $(X, Y) \mapsto (X + q(Y),Y)$, and their shears and overshears. Here, $p$ and $q$ are polynomials in one complex variable, evaluated on matrices.
\end{theorem}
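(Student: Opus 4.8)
The plan is to deduce the statement from the algebraic density property (Theorem~\ref{thmdens}) via the Anders\'en--Lempert theorem (Theorem~\ref{andlemp}). Given such a $\varphi$, the hypothesis provides a continuously differentiable path $t \mapsto \Phi_t$ of holomorphic automorphisms with $\Phi_0 = \id$ and $\Phi_1 = \varphi$. I would apply Theorem~\ref{andlemp} with $\Omega = M = \camo{n}$ (which is affine, hence Stein) and $\varphi_t = \Phi_t$: the map $\varphi_0$ is the identity embedding, each $\varphi_t$ is a holomorphic injection, and $\varphi_t(\camo{n}) = \camo{n}$ is trivially Runge, so all three hypotheses hold. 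The conclusion gives approximation on compacts by a continuous family of automorphisms built from flows of completely integrable generators of \emph{any} dense Lie subalgebra of holomorphic vector fields. The whole problem is therefore reduced to producing a dense Lie subalgebra whose completely integrable generators are exactly the maps listed in the statement.

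Let $L$ be the Lie subalgebra generated by the vector fields $p(X)\frac{\partial}{\partial Y}$ and $q(Y)\frac{\partial}{\partial X}$ (whose time-$1$ flows are the listed Calogero--Moser maps) together with all shears and overshears of these. Each such generator is completely integrable: the Calogero--Moser flows are complete, and a shear or overshear of a complete vector field is again complete. Because $\camo{n}$ is affine, the polynomial vector fields are dense (in the compact-open topology) in all holomorphic vector fields, so it suffices to show that $L$ equals the Lie algebra $\mathrm{VF}_{\mathrm{alg}}(\camo{n})$ of all polynomial vector fields.

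For this I would revisit the proof of Theorem~\ref{thmdens} and inspect the internal mechanism of the Kaliman--Kutzschebauch criterion (Theorem~\ref{thmcompatpair}). The compatible pair used there is $\Theta_1 = Y\frac{\partial}{\partial X}$ and $\Theta_2 = X\frac{\partial}{\partial Y}$, which are the special cases $q(Y) = Y$ and $p(X) = X$ of the listed flows. The key point to verify is that the argument producing the algebraic density property generates all of $\mathrm{VF}_{\mathrm{alg}}(\camo{n})$ using only Lie brackets of: (i) the two fields of the compatible pair; (ii) shears $h \cdot \Theta_i$ with $h \in \ker \Theta_i$, built from the ideal contained in $\mathrm{span}(\ker \Theta_1 \cdot \ker \Theta_2)$; (iii) the overshear $a \cdot \Theta_1$ coming from the degree-$1$ element $a \in \ker \Theta_2$ of Definition~\ref{defcompat}; and (iv) the semi-homogeneity generating fields $F_k, G_k$ from Proposition~\ref{propflex}(2). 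I would observe that $F_k$ and $G_k$ are themselves shears of Calogero--Moser flows, since $\mathop{\mathrm{tr}} Y - \mathop{\mathrm{tr}} Y_0 \in \ker(Y^k \frac{\partial}{\partial X})$ and $\mathop{\mathrm{tr}} X - \mathop{\mathrm{tr}} X_0 \in \ker(X^k \frac{\partial}{\partial Y})$. Hence every generator used lies in $L$, forcing $L = \mathrm{VF}_{\mathrm{alg}}(\camo{n})$, which is dense.

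The main obstacle is exactly this last verification: one must check that the Kaliman--Kutzschebauch construction can be carried out using only shears and overshears of the single compatible pair (rather than of arbitrary complete fields), so that the whole generating set stays inside $L$. This amounts to following the proof of Theorem~\ref{thmcompatpair} and confirming that the completely integrable fields it exhibits are of the required shear/overshear form, and that the degree-$1$ element $a$ genuinely yields an overshear, i.e.\ $a \in \ker (\Theta_1 \circ \Theta_1) \setminus \ker \Theta_1$. Once this bookkeeping is done, the approximating automorphisms delivered by Theorem~\ref{andlemp} are compositions of flows of these fields, which is precisely the asserted form; the remaining points, namely the triviality of the Runge condition and the completeness of shears and overshears, are immediate from the material already recorded in the excerpt.
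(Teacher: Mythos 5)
Your overall strategy coincides with the paper's: apply Theorem~\ref{andlemp} to the isotopy (with the Runge condition trivially satisfied since the image is all of $\camo{n}$), and then reduce everything to showing that the Lie algebra generated by the Calogero--Moser fields together with their shears and overshears is dense, by tracing which complete fields actually enter the proof of Theorem~\ref{thmdens}. Your observation that the maps $F_k$, $G_k$ of Proposition~\ref{propflex} are themselves shears of Calogero--Moser flows (with shear functions $\mathop{\mathrm{tr}} Y - \mathop{\mathrm{tr}} Y_0$ and $\mathop{\mathrm{tr}} X - \mathop{\mathrm{tr}} X_0$ in the respective kernels) is correct and is genuinely part of that tracing.

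However, your bookkeeping list (i)--(iv) omits one ingredient that cannot be dispensed with, and it is exactly the one the paper's proof makes explicit: homogeneity. Tangential semi-homogeneity, which is an input of Theorem~\ref{thmcompatpair}, includes transitivity of the algebraic automorphism group, and both the flexibility argument of Proposition~\ref{propflex} and the Kaliman--Kutzschebauch machinery conjugate vector fields by automorphisms supplied by the transitivity result of \cite{transitive}*{Theorem~1.3} in order to make the constructed fields span everywhere. If those conjugating automorphisms were arbitrary algebraic automorphisms, the conjugated fields would have no reason to remain in your Lie algebra $L$, and the ``bookkeeping'' you defer could not be completed. The paper closes precisely this hole by noting that the Berest--Wilson proof of transitivity uses only Calogero--Moser flows; hence the conjugating automorphisms are compositions of flows of fields already in $L$, so conjugation does not leave the Lie algebra generated by your list. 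You need this fact (or a substitute for it) to finish the argument; with it included, your proof becomes the paper's proof.
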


\begin{example}
Consider the Calogero--Moser flow
\[
\varphi_t((X,Y)) = (X, Y + t \cdot \id)
\]
Obviously all well-defined polynomial functions of $X$ only are invariant functions, in particular all the elementary symmetric functions of the eigenvalues of $X$ and all polynomials therein. 
A shear of $\varphi_t$ is for example given by 
\[
(X,Y) \mapsto (X, Y + t \cdot (\mathop{\mathrm{tr}}(X) + \det(X)) \cdot \id)
\]
\end{example}

\begin{proof}[Proof of Theorem \ref{camoautos}]
This follows directly from Theorem \ref{andlemp} and the fact that the proof of transitivity in \cite{transitive}*{Theorem 1.3} requires only Calogero--Moser flows, and hence also flexibility requires only Calogero--Moser flows and their shears. The proof of the density property also involves shears and overshears of such Calogero--Moser flows.
\end{proof}

\begin{remark}
It is not obvious whether the automorphism given by \[(X,Y) \mapsto (Y^\intercal,X^\intercal)\] is connected to the identity by a continuously differentiable path within the holomorphic automorphisms of $\camo{n}$ for $n \geq 2$. 
\end{remark}

\section{Example: automorphisms of $\camo{2}$}
\label{secexample}

In the following we will study $\camo{2}$. Our goal is to give an explicit description of the manifold (up to a finite cover) as well as some of the complete vector fields and their kernels.
 
We may choose $Y = \begin{pmatrix} \lambda & 1 \\ 0 & \lambda+\varepsilon \end{pmatrix}$ as a representative which will work both for the case of different eigenvalues and the double eigenvalue case which necessarily can't be diagonal due to the rank-$1$ condition. Note that for $\varepsilon \neq 0$, there are two representatives for the same element:
\[
Y = \begin{pmatrix} \lambda & 1 \\ 0 & \lambda+\varepsilon \end{pmatrix} \sim  \begin{pmatrix} \lambda+\varepsilon & 1 \\ 0 & \lambda \end{pmatrix} 
\]
The rank-$1$ condition now becomes
\begin{equation}
\label{eqrank}
\tag{R}
  \varepsilon^2 x_{21} x_{12}
- \varepsilon x_{21} ( x_{22} - x_{11} )
+ 1 - x_{21}^2 = 0
\end{equation}
which forces $x_{21} \neq 0$.

The subgroup of $\glgrp_2(\CC)$ that leaves the given $Y$ invariant, can be parameterized by
\[G = \begin{pmatrix}g & h\\
0 & \epsilon h+g\end{pmatrix}\]
and we have to require $g \neq 0$. Then $h \in \CC$, if $\varepsilon = 0$, and $h \in \CC \setminus \{-g/\varepsilon\}$ otherwise.

Our goal is to find a canonical representative for $X$ with $x_{12} = 0$. To this end, we choose $g = x_{21}$ and note that
\[
G \cdot X \cdot G^{-1} = 
\begin{pmatrix}{x_{11}}+h & (h {x_{22}}+{x_{12}} {x_{21}}-h {x_{11}}-{{h}^{2}})/({x_{21}}+\varepsilon  h) \\
{x_{21}}+\varepsilon  h & {x_{22}}-h\end{pmatrix}
\]
We can always solve $h x_{22}+x_{12} x_{21} - h x_{11}-h^2 = 0$ for $h$:
\[
2 h = \sqrt{(x_{22} - x_{11})^2 + 4 x_{12} x_{21}} + (x_{22} - x_{11})
\]
where we can take any complex root. We just need to check carefully if there exist different representatives for the same element. Using the formula above, we can see that there are generally two representatives which can be mapped to each other using $h = x_{22} - x_{11}$, and only one representative if $x_{22} = x_{11}$, i.e.\ when $X$ has a double eigenvalue:
\[
X = \begin{pmatrix}
x_{11} & 0 \\ x_{21} & x_{22}
\end{pmatrix} \sim \begin{pmatrix}{x_{22}} & 0\\
\epsilon  \left( {x_{22}}-{x_{11}}\right) +{x_{21}} & {x_{11}}\end{pmatrix}
\]

Setting $\delta = x_{22} - x_{11}$, we obtain
\[
\widehat{\camo{2}} := \{ (\lambda, \varepsilon, x_{11}, x_{21}, \delta) \in \CC^5 \; : \; x_{21}^2 + \varepsilon \cdot \delta \cdot x_{21} - 1 = 0 \}
\]

We can of course see easily that $\widehat{\camo{2}}$ is a smooth affine variety of dimension $4$. Moreover, we can find compatible pairs of $\CC^+$-actions and prove the algebraic density property for $\widehat{\camo{2}}$. However, we need to pass from $\widehat{\camo{2}}$ to $\camo{2}$ by identifying the pairs of equivalent representatives by two $\mathbb{Z}_2$-actions:
\begin{align}
\label{eqz2one}
(\lambda, \varepsilon, x_{11}, x_{21}, \delta) &\mapsto (\lambda, \varepsilon, x_{11} + \delta, x_{21} + \varepsilon \delta, -\delta) \\
\label{eqz2two}
(\lambda, \varepsilon, x_{11}, x_{21}, \delta) &\mapsto (\lambda+\varepsilon, -\varepsilon, x_{11}, x_{21} + \varepsilon \delta, \delta)
\end{align}
Where the first action as obtained by the proper choice of $G$ above ($h = \delta$), interchanging $x_{11}$ and $x_{22}$ without affecting the representative $Y$, and where the second action is obtained by conjugation with $\begin{pmatrix}
1 & 0 \\ -\varepsilon & 1
\end{pmatrix}$ interchanging the eigenvalues of $Y$ and which is also affecting $X$.

\begin{lemma}
The ring of regular functions of $\camo{2}$ is generated by 
\[
\begin{split}
\delta^2 = (\mathop{\mathrm{tr}} X)^2 - 4 \det X, \; \varepsilon^2 = (\mathop{\mathrm{tr}} Y)^2 - 4 \det Y, \\ 2 \lambda + \varepsilon = \mathop{\mathrm{tr}} Y, \; 2 x_{11} + \delta = \mathop{\mathrm{tr}} X, \; x_{21} + \frac{1}{x_{21}}
\end{split}
\]
\end{lemma}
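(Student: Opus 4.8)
The plan is to compute the ring of regular functions of $\camo 2$ as the ring of invariants $\CC[\widehat{\camo 2}]^{G}$, where $G \cong \ZZ_2 \times \ZZ_2$ is generated by the two involutions $\sigma$ and $\tau$ given by \eqref{eqz2one} and \eqref{eqz2two}. First I would change coordinates on $\widehat{\camo 2}$ from $(\lambda, \varepsilon, x_{11}, x_{21}, \delta)$ to $u := 2\lambda + \varepsilon = \mathop{\mathrm{tr}} Y$, $v := 2 x_{11} + \delta = \mathop{\mathrm{tr}} X$, together with $\varepsilon$, $\delta$ and $s := x_{21}$; this is an isomorphism since $\lambda = (u-\varepsilon)/2$ and $x_{11} = (v - \delta)/2$. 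The defining relation becomes $s^2 + \varepsilon \delta s - 1 = 0$, involving only $\varepsilon, \delta, s$, so that $\CC[\widehat{\camo 2}] = \CC[u,v] \otimes_\CC A$ with $A := \CC[\varepsilon, \delta, s]/(s^2 + \varepsilon \delta s - 1)$, with $G$ acting trivially on the factor $\CC[u,v]$. The key point of this step is that in these coordinates the two involutions become transparent: using the relation, $x_{21}(x_{21}+\varepsilon\delta)=1$ gives $x_{21} \mapsto x_{21} + \varepsilon\delta = 1/x_{21}$, so that $\sigma$ fixes $u, v, \varepsilon$ and sends $\delta \mapsto -\delta$, $s \mapsto 1/s$, while $\tau$ fixes $u, v, \delta$ and sends $\varepsilon \mapsto -\varepsilon$, $s \mapsto 1/s$.

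Granting this, $\CC[\widehat{\camo 2}]^G = \CC[u,v] \otimes_\CC A^G$, and it remains to determine $A^G$. Here I would exploit that $s$ is a unit in $A$, with $s^{-1} = s + \varepsilon\delta$ and hence $\varepsilon\delta = s^{-1} - s \in \CC[s, s^{-1}]$. Reducing every occurrence of the product $\varepsilon\delta$ via this identity shows that $A$ has the $\CC$-basis consisting of the monomials $s^k$, $\varepsilon^a s^k$ and $\delta^b s^k$ with $k \in \ZZ$ and $a, b \geq 1$. Crucially, the $G$-action preserves each of these three families — it only sends $s \mapsto s^{-1}$ and multiplies $\varepsilon, \delta$ by signs, never converting an $\varepsilon$-monomial into a $\delta$-monomial — so $A$ splits as a $G$-module into the pure-$s$ part, the $\varepsilon$-part and the $\delta$-part, and the invariants can be computed family by family.

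A short calculation in each family then finishes the proof. Writing $w := s + s^{-1} = x_{21} + 1/x_{21}$, the symmetric Laurent polynomials in $s$ are exactly $\CC[w]$; in the $\varepsilon$-family, invariance under $\tau$ forces the exponent of $\varepsilon$ to be even and invariance under $\sigma$ forces the $s$-part to be symmetric, yielding $\varepsilon^2\,\CC[\varepsilon^2, w]$, and symmetrically $\delta^2\,\CC[\delta^2, w]$ for the $\delta$-family. Thus $A^G$ is generated by $\varepsilon^2$, $\delta^2$ and $w$, subject to the single relation $\varepsilon^2 \delta^2 = (\varepsilon\delta)^2 = (s^{-1}-s)^2 = w^2 - 4$; so $A^G$ is the smooth Danielewski surface $\{ \varepsilon^2\delta^2 = w^2 - 4\}$, which is a reassuring consistency check since then $\dim \CC[\widehat{\camo 2}]^G = 4 = \dim \camo 2$. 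Combining with $u,v$ gives the five stated generators, and I would close by identifying them invariantly: with $X = \left(\begin{smallmatrix} x_{11} & 0 \\ x_{21} & x_{11}+\delta \end{smallmatrix}\right)$ and $Y = \left(\begin{smallmatrix}\lambda & 1 \\ 0 & \lambda + \varepsilon\end{smallmatrix}\right)$ one computes $u = \mathop{\mathrm{tr}} Y$, $v = \mathop{\mathrm{tr}} X$, $\varepsilon^2 = (\mathop{\mathrm{tr}} Y)^2 - 4 \det Y$ and $\delta^2 = (\mathop{\mathrm{tr}} X)^2 - 4 \det X$. The only genuine obstacle is the invariant-ring computation of the middle paragraph; the coordinate change and the final identifications are routine, but everything hinges on observing that the relation collapses both involutions to the inversion $s \mapsto 1/s$ together with independent sign changes of $\varepsilon$ and $\delta$, which is precisely what renders the monomial basis $G$-stable.
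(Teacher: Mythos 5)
Your proof is correct, and it follows the paper's overall strategy: identify $\CC[\camo{2}]$ with the ring of $\ZZ_2\times\ZZ_2$-invariants of $\CC[\widehat{\camo{2}}]$. The difference is one of substance in the execution. The paper's proof consists of the single assertion that the generators are obtained by ``symmetrizing the generators'' of $\CC[\widehat{\camo{2}}]$, which is not by itself a proof: Reynolds-averaging the generators of a ring does not in general generate the invariant ring (already for $x\mapsto -x$ on $\CC[x]$ the average of $x$ is $0$, while the invariants are $\CC[x^2]$), so one must in principle symmetrize monomials in the generators and then argue that finitely many suffice. Your computation supplies exactly this missing content: the linear change of coordinates $(u,v,\varepsilon,\delta,s)$ that makes the two involutions act diagonally (trivially on $u,v$, by signs on $\varepsilon,\delta$, and by $s\mapsto 1/s$, using $x_{21}+\varepsilon\delta = 1/x_{21}$ from the defining relation), the resulting tensor decomposition $\CC[\widehat{\camo{2}}]=\CC[u,v]\otimes A$, and the family-by-family determination of $A^G$ from the monomial spanning set $\{s^k,\varepsilon^a s^k,\delta^b s^k\}$. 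One minor remark: your phrase ``splits as a $G$-module'' tacitly uses linear independence of these monomials; this is true (with $\varepsilon\delta$ as leading term of the relation, $A$ is free over $\CC[s^{\pm 1}]$ with basis $1,\varepsilon^a,\delta^b$), but it is not even needed, since spanning plus $G$-stability of each family already yields $A^G=\rho(A_0)+\rho(A_\varepsilon)+\rho(A_\delta)$ for the Reynolds operator $\rho$. As a bonus your argument proves more than the lemma states: it exhibits the ideal of relations, identifying the $(\varepsilon^2,\delta^2,w)$-part of the invariant ring as the Danielewski surface $\varepsilon^2\delta^2=w^2-4$, which is a nice structural complement to the paper's Section 4.
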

\begin{proof}
We have $\camo{2} = \widehat{\camo{2}} \,/\!/\, \ZZ_2 \times \ZZ_2$. The generators can be obtained by symmetrizing the generators of the ring of regular functions of $\widehat{\camo{2}}$.
\end{proof}

We have the following obvious $\CC^+$-actions on $\widehat{\camo{2}}$ that commute with both $\ZZ_2$-actions \eqref{eqz2one} and \eqref{eqz2two} above, and hence give rise to $\CC^+$-actions on $\camo{2}$:
\begin{align*}
(\lambda, \varepsilon, x_{11}, x_{21}, \delta) &\stackrel{\varphi_t}{\mapsto} (\lambda, \varepsilon, x_{11} + t, x_{21}, \delta) \\
(\lambda, \varepsilon, x_{11}, x_{21}, \delta) &\stackrel{\psi_s}{\mapsto} (\lambda + s, \varepsilon, x_{11}, x_{21}, \delta)
\end{align*}

Moreover, these two actions, corresponding to $(X + t \cdot \id, Y)$ and $(X, Y + s \cdot \id)$ respectively, commute. The following functions (and consequently all polynomials therein) on $\camo{2}$ are invariant under $\varphi_t$ and $\psi_t$:
\[
\delta^2, \varepsilon^2, x_{21} + \frac{1}{x_{21}} 
\]
Note that the rank-$1$ condition \eqref{eqrank} gives a non-trivial polynomial relation between these three functions. We can now also see that for example
\[
(X,Y) \mapsto (X, Y + \varepsilon^2 t  \cdot \id) = (X, Y + ((\mathop{\mathrm{tr}} Y)^2 - 4 \det Y) \cdot t \cdot \id) 
\]
is a $\CC^+$-action which is not a Calogero--Moser flow.

We have now explicitly identified two vector fields that form a compatible pair on $\camo{2}$ according to Definition \ref{defcompat}:
\begin{align*}
\Theta_1 &= \frac{d}{d s} \psi_s = \frac{\partial}{\partial \lambda} \\
\Theta_2 &= \frac{d}{d t} \varphi_t = \frac{\partial}{\partial x_{11}} \\
a &= 2 \lambda + \varepsilon
\end{align*}
with $\Theta_2(a) = 0$, $\Theta_1(a) \neq 0$ and $\Theta_1(\Theta_1(a)) = 0$.
For the kernels we obtain that
\begin{align*}
\ker \Theta_1 &\supset \mathop{\mathrm{span}} \{ \delta^2, \varepsilon^2, x_{21} + \frac{1}{x_{21}}, 2 x_{11} + \delta \} \\
\ker \Theta_2 &\supset \mathop{\mathrm{span}} \{ \delta^2, \varepsilon^2, x_{21} + \frac{1}{x_{21}}, 2 \lambda + \varepsilon \}
\end{align*}

Note that the kernel of a derivation is multiplicatively closed. It is then easy to see that $\mathop{\mathrm{span}} (\ker \Theta_1 \cdot \ker \Theta_2)$ contains the ideal generated by $(2 x_{11} + \delta) \cdot (2 \lambda + \varepsilon)$. Hence $\Theta_1$ and $\Theta_2$ form a compatible pair on $\camo{2}$.

We conclude with the remark that the automorphism $(X,Y) \mapsto (Y^\intercal,X^\intercal)$ is given by:
\[
(\lambda, \varepsilon, x_{11}, x_{21}, \delta) \mapsto (x_{11}, \delta, \lambda, x_{21}, \varepsilon)
\]
It it is not clear whether this automorphism is in the connected component of the identity.


%


\begin{bibdiv}
\begin{biblist}

\bib{AL2}{article}{
   author={Anders\'en, Erik},
   author={Lempert, L\'aszl\'o},
   title={On the group of holomorphic automorphisms of ${\bf C}^n$},
   journal={Invent. Math.},
   volume={110},
   date={1992},
   number={2},
   pages={371--388},
   issn={0020-9910},
   review={\MR{1185588}},
   doi={10.1007/BF01231337},
}

\bib{MR3833804}{article}{
   author={Andrist, Rafael B.},
   title={The density property for Gizatullin surfaces with reduced
   degenerate fibre},
   journal={J. Geom. Anal.},
   volume={28},
   date={2018},
   number={3},
   pages={2522--2538},
   issn={1050-6926},
   review={\MR{3833804}},
   doi={10.1007/s12220-017-9916-y},
}

\bib{fibred}{article}{
   author={Andrist, Rafael B.},
   author={Kutzschebauch, Frank},
   title={The fibred density property and the automorphism group of the
   spectral ball},
   journal={Math. Ann.},
   volume={370},
   date={2018},
   number={1-2},
   pages={917--936},
   issn={0025-5831},
   review={\MR{3747506}},
   doi={10.1007/s00208-017-1520-8},
}

\bib{A-Z}{article}{
   author={Arzhantsev, I.},
   author={Flenner, H.},
   author={Kaliman, S.},
   author={Kutzschebauch, F.},
   author={Zaidenberg, M.},
   title={Flexible varieties and automorphism groups},
   journal={Duke Math. J.},
   volume={162},
   date={2013},
   number={4},
   pages={767--823},
   issn={0012-7094},
   doi={10.1215/00127094-2080132},
}

\bib{transitive}{article}{
   author={Berest, Yuri},
   author={Wilson, George},
   title={Automorphisms and ideals of the Weyl algebra},
   journal={Math. Ann.},
   volume={318},
   date={2000},
   pages={127--147},
}

\bib{BEE}{article}{
   author={Berest, Yuri},
   author={Eshmatov, Alimjon},
   author={Eshmatov, Farkhod},
   title={Multitransitivity of the Calogero--Moser spaces},
   journal={Transformation Groups},
   volume={21},
   date={2016},
   number={1},
   pages={35--50},
}

\bib{homology}{article}{
   author={Ellingsrud, Geir},
   author={Str{\o}mme, Stein Arild},
   title={On the homology of the Hilbert scheme of points in the plane},
   journal={Invent. Math.},
   volume={87},
   date={1987},
   number={2},
   pages={343--352},
}

\bib{FR}{article}{
   author={Forstneri\v c, Franc},
   author={Rosay, Jean-Pierre},
   title={Approximation of biholomorphic mappings by automorphisms of ${\bf
   C}^n$},
   journal={Invent. Math.},
   volume={112},
   date={1993},
   number={2},
   pages={323--349},
   issn={0020-9910},
   review={\MR{1213106}},
   doi={10.1007/BF01232438},
}

\bib{FR-err}{article}{
   author={Forstneri\v c, Franc},
   author={Rosay, Jean-Pierre},
   title={Erratum: ``Approximation of biholomorphic mappings by
   automorphisms of $\mathbf C^n$'' [Invent.\ Math.\ {\bf 112} (1993), no. 2,
   323--349; MR1213106 (94f:32032)]},
   journal={Invent. Math.},
   volume={118},
   date={1994},
   number={3},
   pages={573--574},
   issn={0020-9910},
   review={\MR{1296357}},
   doi={10.1007/BF01231544},
}

\bib{LND}{book}{
   author={Freudenburg, Gene},
   title={Algebraic Theory of Locally Nilpotent Derivations},
   publisher={Springer},
   date={2017},
   doi={10.1007/978-3-662-55350-3},
}

\bib{densitycriteria}{article}{
   author={Kaliman, Shulim},
   author={Kutzschebauch, Frank},
   title={Criteria for the density property of complex manifolds},
   journal={Invent. Math.},
   volume={172},
   date={2008},
   number={1},
   pages={71--87},
   issn={0020-9910},
   review={\MR{2385667}},
   doi={10.1007/s00222-007-0094-6},
}

\bib{MR2350038}{article}{
   author={Kaliman, Shulim},
   author={Kutzschebauch, Frank},
   title={Density property for hypersurfaces $UV=P(\overline X)$},
   journal={Math. Z.},
   volume={258},
   date={2008},
   number={1},
   pages={115--131},
   issn={0025-5874},
   review={\MR{2350038}},
   doi={10.1007/s00209-007-0162-z},
}

\bib{MR3623226}{article}{
   author={Kaliman, Shulim},
   author={Kutzschebauch, Frank},
   title={Algebraic (volume) density property for affine homogeneous spaces},
   journal={Math. Ann.},
   volume={367},
   date={2017},
   number={3-4},
   pages={1311--1332},
   issn={0025-5831},
   review={\MR{3623226}},
   doi={10.1007/s00208-016-1451-9},
}

\bib{KKS}{article}{
   author={Kazhdan, D.},
   author={Kostant, B.},
   author={Sternberg, S.},
   title={Hamiltonian group actions and dynamical systems of Calogero type},
   journal={Comm. Pure Appl. Math.},
   volume={31},
   date={1978},
   pages={481--507},
}

\bib{kuyu}{article}{
   author={Kuyumzhiyan, Karine},
   title={Infinite transitivity for Calogero-Moser spaces},
   journal={Proc. Amer. Math. Soc.},
   date={2020},
   doi={10.1090/proc/15030}
}

\bib{MR3513546}{article}{
   author={Leuenberger, Matthias},
   title={(Volume) density property of a family of complex manifolds
   including the Koras-Russell cubic threefold},
   journal={Proc. Amer. Math. Soc.},
   volume={144},
   date={2016},
   number={9},
   pages={3887--3902},
   issn={0002-9939},
   review={\MR{3513546}},
   doi={10.1090/proc/13030},
}

\bib{popov}{article}{
   author={Popov, Vladimir L.},
   title={On infinite dimensional algebraic transformation groups},
   journal={Transformation Groups},
   volume={19},
   date={2014},
   number={2},
   pages={549--568},
}

\bib{shear}{article}{
   author={Varolin, Dror},
   title={A general notion of shears, and applications},
   journal={Michigan Math. J.},
   volume={46},
   date={1999},
   number={3},
   pages={533--553},
   issn={0026-2285},
   review={\MR{1721579}},
   doi={10.1307/mmj/1030132478},
}

\bib{Varolin1}{article}{
   author={Varolin, Dror},
   title={The density property for complex manifolds and geometric
   structures},
   journal={J. Geom. Anal.},
   volume={11},
   date={2001},
   number={1},
   pages={135--160},
   issn={1050-6926},
   review={\MR{1829353}},
   doi={10.1007/BF02921959},
}

\bib{Varolin2}{article}{
   author={Varolin, Dror},
   title={The density property for complex manifolds and geometric
   structures. II},
   journal={Internat. J. Math.},
   volume={11},
   date={2000},
   number={6},
   pages={837--847},
   issn={0129-167X},
   review={\MR{1785520}},
   doi={10.1142/S0129167X00000404},
}

\bib{wilson}{article}{
   author={Wilson, George},
   title={Collisions of Calogero-Moser particles and an adelic Grassmannian (With an Appendix by I.G. Macdonald)},
   journal={Invent. Math.},
   volume={133},
   date={1998},
   number={1},
   pages={1--41},
}

\bib{winkelmann}{article}{
   author={Winkelmann, J\"{o}rg},
   title={Large discrete sets in Stein manifolds},
   journal={Math. Z.},
   volume={236},
   date={2001},
   number={4},
   pages={883--901},
   issn={0025-5874},
   review={\MR{1827508}},
   doi={10.1007/PL00004855},
}

\end{biblist}
\end{bibdiv}

\end{document}